\author{Sergio López-Ureña}
\title{A Harten's Multiresolution Framework\\ for Subdivision Schemes}
\date{January 2019}
\newcommand{\R}{\mathds{R}}
\newcommand{\Z}{\mathds{Z}}
\newcommand{\ldZ}{\ell_2(\mathds{Z})}
\newcommand{\iZ}{{i\in\Z}}
\newcommand{\lra}{\longrightarrow}
\DeclareMathOperator*{\argmin}{argmin}
\DeclareMathOperator*{\spa}{span}
\newcommand{\cD}{\mathcal{D}}
\newcommand{\cC}{\mathcal{C}}
\newcommand{\cF}{\mathcal{F}}
\newcommand{\cR}{\mathcal{R}}
\newcommand{\cV}{\mathcal{V}}
\newcommand{\cW}{\mathcal{W}}
\newcommand{\dS}{\cR}
\newcommand{\nS}{P}
\newcommand{\espan}{\text{span}}
\theoremstyle{plain}
\newtheorem{thm}{Theorem}
\newtheorem{lem}[thm]{Lemma}
\newtheorem{teo}[thm]{Theorem}
\newtheorem{cor}[thm]{Corolary}
\newtheorem{defi}[thm]{Definition}
\theoremstyle{remark}
\begin{document}

\maketitle

\begin{abstract}
Harten's Multiresolution framework has been applied in different contexts, such as in the numerical simulation of PDE with conservation laws or in image compression, showing its flexibility to describe and manipulate the data in a multilevel fashion. Two basic operators form the basis of this theory: the decimation and the prediction. The decimation is chosen first and determines the type of data that is being manipulated. For instance, the data could be the point evaluations or the cell-averages of a function, which are the two classical environments. Next, the prediction is chosen, and it must be compatible with the decimation.

Subdivision schemes can be used as prediction operators, but sometimes they not fit into one of the two typical environments. In this paper we show how to invert this order so we can choose a prediction first and then define a compatible decimation from that prediction. Moreover, we also prove that any possible decimation can be obtained in this way.
\end{abstract}

\section{Introduction}

Subdivision schemes are a valuable technique for the refinement of data, very common in Computer Aided Geometric Design (CAGD) \cite{Dyn92}, that allows to generate curves (or surfaces or even manifolds) from an initial discrete data set, namely $v^0$. A subdivision scheme recursively generates more and more data sets $v^k$, $k\in\Z_+$, and produces a function from these data sets that it is used as the parametrization of a geometrical object, if the scheme is convergent.

Despite that the subdivision was conceived with a geometrical purpose in CAGD \cite{DLG90}, other applications have been adopted it because of its easy implementation and flexibility to reach special properties. We are interested in the presence of subdivision in multiresolution algorithms, with applications in image processing \cite{ADLT06,CM02}, optimization \cite{DL-UM16,L-UT-LDG-A-C18} and uncertainty quantification \cite{DL-U17}, among others.

Originally, Harten's Mulitiresolution Framework (HMR-F) \cite{Harten96} provided a set of tools that allows to define a consistent multi-scale structure for numerical methods for \emph{conservation laws}. Nevertheless, this theory is prepared for very general multi-scale scenarios and over the years it were found applications in other mathematical fields, for instance in the above mentioned applications \cite{ADLT06,CM02,DL-UM16,DL-U17,L-UT-LDG-A-C18}, where it was combined with subdivision schemes.

In HMR-F, two basic operations are present: \emph{decimation} and \emph{prediction}. In \cite{Harten96} a very detailed study is found where the decimation is a linear operator which is defined before the prediction. Then, a (possibly nonlinear) prediction operator is picked, which must be \emph{consistent} with the decimation previously chosen. This order (first decimation, then prediction) is crucial in conservation laws, because the decimation establishes if a cell-average or a point-value framework has been chosen, which indicates how the prediction should be designed.

In other applications, however, the prediction operators are subdivision schemes and they seem to be more relevant than the decimation operators. In fact, in \cite{DL-U17,DL-UM16,L-UT-LDG-A-C18}, the decimation does not appear in the implementation of the methods, despite being needed to describe the multiresolution structure.
The aim of this paper is to give theoretical support to this procedure. We will not only prove that decimation operators can be defined consistently from linear prediction operators (subdivision schemes), but also that every decimation operator chosen for a prediction operator can be derived from such prediction. Thus, a well-defined multiresolution setting is always guaranteed in those applications where the prediction is based on linear subdivision schemes.

The nonlinear subdivision schemes that have been applied in practical situations until now (see for example \cite{ADL11,ADLT06,AL05,ADS16,Aslam18,DL-U18,DL-UM16}) are usually compatible with either the point-value framework or the cell-average framework. Since our interest for the present work lays outside of these frameworks, we will focus only on the linear case. This case is still of relevance since there are linear subdivision schemes that do not fit in any of both common frameworks, such as the (exponential) B-Splines family of subdivision schemes \cite{CR11,Dyn92}. Our results provide compatible decimation and discretization operations for these situations.

The paper is organized as follows: In Section \ref{sec:harten}, we introduce the main concepts of the HMR-F which are necessary to understand the situation we would like to study. In Section \ref{sec:construction} the construction of decimation and discretization operators from linear prediction and reconstruction operators is performed, and consequently,  by Theorem \ref{teo:hierar_define} a complete multiresolution setting is obtained. Morevoer, in Theorem \ref{teo:equivalence} we prove that every linear multiresolution framework can be built from the prediction and reconstruction operators. We apply our results to a classical family of subdivision schemes in Section \ref{sec:example}.

\section{Harten's Multiresolution Framework} \label{sec:harten}

In signal processing, data is often a discrete representation of a function. For example, consider a digital photo of a landscape, which is actually a set of pixels. Once the picture is taken, we can apply some treatments to it, such as denoising, improving the resolution, object classification, etc. All these operations belong to the signal processing field, and in particular, to image processing.

When a signal is processed, the discrete data is manipulated taking into account the underlying continuous nature that it is approximating. In the previous example, to denoise (that is, to remove noise present in a picture) we may assume that the color throughout the picture comes from a piecewise smooth function, so isolated sudden changes in the color may be identify as noise.

Harten's Multiresolution Framework (HMR-F) can be used to handle properly this kind of signals in a multi-scale fashion. A complete description of the HMR-F can be found in \cite{Harten96}. This section is dedicated to recall some concepts introduced there.

We assume that the function $f$ that we want to manage belongs to a vector space $\cF$ and that, by performing certain method, we will obtain some discrete data, which we will denote by $v$, belonging to some denumerable vector space $V$.
This process will be performed by what is called the \emph{discretization} operator $\cD:\cF\lra V$.
In the example above, $V$ represents the set of images (with a fixed number and distribution of pixels) that can be taken with a digital camera.

\begin{defi}
\label{defi:discretization}
Let $\cF$ and $V$ be vector spaces and $\cD:\cF\lra V$ a linear operator such that $V=\cD(\cF)$. If $V$ has a denumerable basis, say $\{\eta_i\}$, then we say that $\cD$ is a \emph{discretization} operator and we refer to $v:=\cD f$ as the discretization of $f$.
\end{defi}

The vector space $V$ is usually a space of bounded sequences $\ell_\infty(B)$ or of square-summable sequences $\ell_2(B)$, where the indexes are on a discrete domain, for instance $B=\Z^s$. An appropiate norm may be considered on the vector space $\ell_\infty(B)$ so that it has a denumerable Schauder basis, while $\ell_2(B)$ is a Hilbert space. Classical examples of discretization operators are the point-value operator and the cell-average operator.
Let $\cF$ be the set of bounded continuous functions defined on $\R$. Given $X=(x_i)_\iZ$, a grid over $\R$, the associated point-value operator is defined as
$$\cD:\cF \lra \ell_\infty(\Z), \qquad \cD(f)=(f(x_i))_\iZ,$$
and for a given sequence of bounded intervals $C=(c_i)_\iZ$, the associated cell-average operator is defined as
$$\cD:\cF \lra \ell_\infty(\Z), \qquad \cD(f)=\frac{1}{|c_i|}\int_{c_i}f,$$
where $|c_i|=\int_{c_i}1$ is the length of the interval.

The word \emph{multiresolution} in HMR-F comes from the definition of several \emph{resolutions} or \emph{scales}, which can be thought of successive grids in the discretization process. In order to achieve this structure, we need a sequence of discretization operators
fulfilling the \emph{nested} condition (\ref{eq:nested_condition}).
\begin{defi} 
Let $\{\cD_k\}_{k=0}^\infty$ be a sequence of discretization operators
$$\cD_k:\cF\lra V^k, \qquad V^k=\cD_k(\cF)=\spa_i\{\eta_i^k\}.$$
We say that $\{\cD_k\}_{k=0}^\infty$ is \emph{nested} if for all $f\in\cF$ and all $k\geq 0$
\begin{equation}
\cD_{k+1} f=0 \quad \Longrightarrow \quad \cD_{k} f=0. \label{eq:nested_condition}
\end{equation}
\end{defi}

To get a nested sequence of point-value discretizations it is enough to take a sequence of grids $(X^k)_{k\geq0}$ satisfying $X^{k}\subset X^{k+1}$, for instance $X^k=(i2^{-k})_\iZ$. Analogously, the cell-average case demands that each interval of $C^{k}$ is the union of intervals in $C^{k+1}$, such as $c_i^k=[(i-1/2)2^{-k},(i+1/2)2^{-k}]$, $C^k=(c_i^k)_\iZ$, that satisfies $c_{2i}^{k+1} \cup c_{2i+1}^{k+1} = c_i^{k}$.

It is said that the resolution level $k+1$ is finer than $k$ and, equivalently, the space $V^{k}$ is coarser than $V^{k+1}$. Because of (\ref{eq:nested_condition}), these concepts are coherent and allow us to define the \emph{decimation} operators.
\begin{defi}
Let $\{\cD_k\}_{k=0}^\infty$ be a sequence of nested discretization operators.
We define the sequence of \emph{decimation} operators $\{D_{k+1}^{k}\}_{k=0}^\infty$ as
$$D_{k+1}^{k}:V^k\lra V^{k-1}, \qquad v^{k+1}=\cD_{k+1} f \mapsto v^{k}=\cD_{k} f.$$
\end{defi}

A decimation operator projects the data contained at a finer level to a coarser one, without knowledge of the function $f$. In real applications this is essential, since the only available data is discrete.

The \emph{multiresolution setting} is defined as the set of all the spaces $V^k$ and the decimation operators: $(\{V^k\},\{D_{k+1}^{k}\})_{k=0}^\infty$. 
In this framework there are other relevant operators, the \emph{reconstruction} operators, which takes discrete data and tries to approximate the original function. Observe that interpolation techniques can be used as reconstruction operators.
\begin{defi} 
We say that $\cR:V\lra\cF,$ $V=\cD(\cF),$
is a \emph{reconstruction} operator (compatible with $\cD$) if it is a right-inverse of $\cD$, i.e.
\begin{equation} \label{eq:consistency}
\cD\cR=I,
\end{equation}
where $I$ is the identity operator in $V$.
\end{defi}
The condition (\ref{eq:consistency}) is known as \emph{consistency}, and guarantees that $\cR_k$ is injective:
$$ \cR_k v^k = \cR_k w^k \lra v^k = \cD_k \cR_k v^k = \cD_k \cR_k w^k = w^k.$$

The reconstruction operator is crucial in practical applications to manage the data among different $V^k$ (without knowing the true function $f$). On the one hand, $D_{k+1}^{k}$ sends $v_{k+1}:=\cD_{k+1} f$ to $v^k:=\cD_{k} f$ and, on the other hand, we may consider the \emph{prediction} operators, that from $v^{k}$ try to approximate $v^{k+1}$.
\begin{defi} 
We say that $\{P_{k}^{k+1}\}_{k=0}^\infty$ are \emph{prediction} operators for the multiresolution setting, if each one is a right inverse of $D_{k+1}^{k}$ in $V^{k}$,i.e.,
$$P_{k}^{k+1}:V^{k}\lra V^{k+1}, \quad D_{k+1}^{k}P_{k}^{k+1}=I_{k}, \qquad k\geq 0.$$
\end{defi}
Observe that the prediction and the decimation are consistent \eqref{eq:consistency}. As proved in \cite[Theorem 3.2]{Harten96}, there is a useful property that links the prediction and the decimation with the reconstruction and the discretization:
\begin{equation} \label{eq:decimation_prediction_composition}
D_{k+1}^{k}=\cD_{k}\cR_{k+1}, \qquad P_{k}^{k+1}=\cD_{k+1}\cR_{k}.
\end{equation}
The reconstruction and prediction operators may be nonlinear, which is interesting for some applications \cite{ADL11,ADLT06,AL05,ADS16,Aslam18,DL-U18,DL-UM16}. However, $D_{k+1}^{k}$ is always linear, despite its relation with $\cR_k$ according to \eqref{eq:decimation_prediction_composition}.

\section{Construction of multiresolution settings from prediction operators} \label{sec:construction}

In the last section we showed that a multiresolution setting is defined through
the decimation operators, which in turn could be defined from nested discretization operators. From that, prediction operators are taken in a consistent way.
Here we prove that starting from a linear prediction and a reconstruction operators, which fulfill a certain property, we can always find an associated decimation operator. This is done in one of the main results of this section, Theorem \ref{teo:hierar_define}, which can always be applied to linear convergent subdivision schemes. In particular, throughout this section the theory is applied to the (so-called) univariate, uniform, local and binary subdivision schemes, a rather simple class of schemes.

\begin{defi}
\label{defi:SS}
A \emph{subdivision scheme} is a sequence of operators $\{\nS_k^{k+1}:V^k\lra V^{k+1}\}$ such that
\begin{align*}
(\nS_k^{k+1} v)_{i}=\sum_{j} a^k_{i-2j} v_i,
\end{align*}
for some sequences $a^k = \{a^k_i\}_\iZ$ of compact support\footnote{
A function $\phi:A\subset\R\lra\R$ is of \emph{compact support} if $\overline{\{t\in A \, : \, \phi(t)\neq 0\}}$ is a compact subset. A sequence $a$ is of compact support if $\{i\in \Z \, : \, a_i\neq 0\}$ is finite.
}. 
It is convergent if
$$\forall k\geq0\quad  \forall v^k\in V^k \quad \exists \lim_{L \rightarrow \infty} \cR_L P_{L-1}^L  P_{L-2}^{L-1} \cdots P_{k}^{k+1} v^k,$$
where the operator $\cR_k : V^k \rightarrow \cF$ is defined as
$$\cR_k v^k(t) := \sum_\iZ v_i^k \psi(2^kt-i), \qquad \psi(t):= \max\{1-|t|,0\}.$$
\end{defi}

In Definition \ref{defi:SS}, $\cR_k$ is the linear operator that, given $v^k$, constructs the unique piece-wise linear function with nodes at $2^{-k}\Z$ fulfilling $\cR_k (v^k)(i2^{-k}) = v_i^k$.
Hence, a subdivision scheme is convergent if, and only if, the sequence of piece-wise linear functions converges to a function.
A classical result in subdivision theory is that $\psi$ can be replaced by other continuous compactly supported function (see for example \cite[Lemma 2.2]{Dyn92}), although working with piece-wise linear functions is usually preferable as they are easier to visualize.

Subdivision schemes are usually defined on $V^k=\ell_\infty(\Z)$ and they converge in $\cF=\cC(\R) \cap L_\infty(\R)$, but, in practice, the data set is always finite, i.e. $V^k=\R^{n_k}$. Some of the following require $V^k$ and $\cF$ to be Hilbert spaces, so henceforward we suppose that
\begin{align*}
 V^k &= \ell_{2}(\Z):= \{ v^k \, : \, \sum_\iZ |v_i^k|^2 < +\infty \},\\
  \cF &= L_{2}(\R):= \{ f \, : \, \int_\R |f(t)|^2 < +\infty \}.
 \end{align*}
This choice of spaces trivially includes the finite case, since we can suppose that the sequence is identically zero outside the range. Recall that their inner products are
\begin{align*}
\langle u^k,v^k \rangle \mapsto \sum_\iZ u_i^k v_i^k , \qquad
\langle f,g \rangle \mapsto \int_\R f(t) g(t).
 \end{align*}

We can extend Definition \ref{defi:SS} to HMR-F. The subdivision operators are clearly prediction operators, while the piece-wise linear function is a reconstruction operator.

\begin{defi}
\label{defi:hierar_pred}
Let $\{P_{k}^{k+1}\}_{k=0}^\infty$ be a sequence of injective operators such that
$$ P_{k}^{k+1}:V^{k} \lra V^{k+1}, \qquad V^k=\espan\{\eta^k_i\}.$$
We say that  $\{P_{k}^{k+1}\}_{k=0}^\infty$ is a \emph{convergent} sequence of prediction operators in $\cF$ if there exists a sequence of injective operators $\{\cR_k\}_{k=0}^\infty$, $\cR_k:V^k\lra\cF$, such that $\cR_{k}(V^{k})\subset \cR_{k+1}(V^{k+1})$ and
$$\forall k\geq 0 \quad \forall v^k\in V^k \quad \exists \lim_{L\rightarrow\infty} \cR_L P_{L-1}^L P_{L-2}^{L-1} \cdots P_k^{k+1} v^k.$$
\end{defi}

We remark that the operator in Definition \ref{defi:SS} fulfills the condition $\cR_{k}(V^{k})\subset \cR_{k+1}(V^{k+1})$ because $\psi(t/2) = \frac12 \psi(t-1) + \psi(t) + \frac12 \psi(t+1)$. In addition, $\cR_k$ can be defined using other compactly supported $\psi$ satisfying $\psi(t/2) = \sum b_i \psi(t-i)$, for some compactly supported sequence $b=(b_i)$, which is a classic result in subdivision theory. Indeed,
\begin{align} \label{eq:Rk_in_Rk1}
\begin{split}
\cR_k v^k(t) &= \sum_\iZ v_i^k \psi(2^kt-i) = \sum_\iZ v_i^k \psi((2^{k+1}t-2i)/2) \\
&= \sum_\iZ v_i^k \sum_{j\in\Z} b_j \psi(2^{k+1}t-2i-j)\\
&= \sum_{j\in\Z} \left (\sum_\iZ v_i^k b_{j-2i}\right ) \psi(2^{k+1}t-j)= \cR_{k+1} \tilde v^k (t),
\end{split}
\end{align}
where $\tilde v^k_j :=  \sum_\iZ v_i^k b_{j-2i}$.

Now that we have chosen prediction and reconstruction operators from the framework of subdivision schemes, we set to define discretization operators $\cD_k$ that are consistent with them. In the definitions above, applying the reconstruction operator to our data gives us a function, $\cR_k v^k$, that should approximate $f$. This motivates us to define $\cD_k$ so that $\cR_k v^k$ is the best possible approximation of $f$ that $\cR_k$ can reach. The next result, which is a direct application of the Hilbert projection theorem to our setting, formalizes this idea.

\begin{teo} \label{teo:cSk+}
Let $\cF$ be a Hilbert space and let $\dS_k:V^k\lra  \cF$ be an injective linear operator. Then the operator defined by
$$\cD_k f:=\argmin_{v\in V^k} \|f-\dS_k v\|^2$$
is well defined, linear and it is a left inverse of $\dS_k$, that is, $\cD_k\dS_k = I_{V_k}$.
\end{teo}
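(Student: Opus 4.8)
The plan is to invoke the Hilbert projection theorem (closest-point projection onto a closed subspace of a Hilbert space) applied to the subspace $W_k := \dS_k(V^k) \subseteq \cF$. First I would check that $W_k$ is a closed linear subspace: linearity is immediate from linearity of $\dS_k$, but closedness is the point that needs care. Since $\dS_k$ is injective and linear, $W_k$ inherits a basis $\{\dS_k \eta_i^k\}$ from the basis $\{\eta_i^k\}$ of $V^k$, and in the concrete setting of the paper ($V^k = \ell_2(\Z)$, $\cR_k v = \sum_i v_i \psi(2^k t - i)$ with $\psi$ compactly supported) one can argue that $\dS_k$ is bounded below, i.e. $\|\dS_k v\| \geq c_k \|v\|$ for some $c_k > 0$; boundedness below of an injective bounded operator gives closed range. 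I would state this as the standing assumption actually in force (it holds for the operators coming from subdivision), so that $W_k$ is genuinely closed.

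Granting that $W_k$ is a closed subspace, the Hilbert projection theorem yields, for every $f \in \cF$, a unique element $g_f \in W_k$ minimizing $\|f - g\|$ over $g \in W_k$, and the map $f \mapsto g_f$ (the orthogonal projection $\Pi_{W_k}$) is linear and bounded. Because $\dS_k$ is injective, there is a unique $v \in V^k$ with $\dS_k v = g_f$; define $\cD_k f := v = \dS_k^{-1} \Pi_{W_k} f$. This is exactly $\argmin_{v \in V^k} \|f - \dS_k v\|^2$, so $\cD_k$ is well defined, and it is linear as a composition of the linear maps $\Pi_{W_k}$ and $\dS_k^{-1}: W_k \to V^k$ (the latter linear since $\dS_k$ is a linear bijection onto $W_k$).

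Finally I would verify the left-inverse property. Given $v \in V^k$, apply $\cD_k$ to $f := \dS_k v \in W_k$. Since $f$ already lies in $W_k$, the unique minimizer of $\|f - g\|$ over $g \in W_k$ is $g = f$ itself (the distance is zero), so $\Pi_{W_k} f = f = \dS_k v$, and hence $\cD_k \dS_k v = \dS_k^{-1}(\dS_k v) = v$. As this holds for all $v \in V^k$, we get $\cD_k \dS_k = I_{V^k}$.

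The main obstacle is the closedness of $W_k = \dS_k(V^k)$: the Hilbert projection theorem requires a closed subspace, and injectivity of $\dS_k$ alone does not guarantee closed range. I would handle this by recording explicitly that the reconstruction operators under consideration are bounded below (which is automatic for the finite-dimensional case $V^k = \R^{n_k}$ and, in the $\ell_2$ setting, follows from the convergence/stability hypotheses on the underlying subdivision scheme — a standard fact, since a convergent subdivision scheme with a basic limit function $\psi$ that generates a Riesz basis of shifts yields exactly such a lower bound). Everything else is a direct and routine transcription of the projection theorem into the operator language of the paper.
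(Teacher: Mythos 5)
Your proof is correct and follows essentially the same route as the paper: orthogonal projection onto $\cV_k=\dS_k(V^k)$ followed by $\left(\left.\dS_k\right|_{\cV_k}\right)^{-1}$, with linearity and the left-inverse property read off from this composition. In fact you are more careful than the paper on the one delicate point: the paper's proof invokes the orthogonal decomposition without verifying that $\dS_k(V^k)$ is closed in $\cF$, whereas you correctly identify that injectivity alone does not give closed range and that a lower bound $\|\dS_k v\|\geq c_k\|v\|$ (or finite dimensionality) must be assumed for the projection theorem to apply.
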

\begin{proof}
Note that
$\cV_k:=\dS_k(V^k)\subset\cF$ is a subspace of $\cF$ because $\dS_k$ is linear. Given $f\in\cF$, using the orthogonal projection there exist two unique vectors $f_1 \in \cV_k,f_2 \in \cV_k^\perp$ such that $f=f_1+f_2$, where $f_1 = \mathbb{P}_{\cV_k}(f) = \argmin_{g\in \cV_k} \|f-g\|^2$, $\mathbb{P}_{\cV_k}$ being the orthogonal projection onto the set $\cV_k$. Since $\dS_k$ is injective, there exists one and only one $v_1\in V^k$ such that $ f_1=\dS_k(v_1)$, so consequently $v_1=\argmin_{v\in V^k} \|f-\dS_k v\|^2$. Then $\cD_k$ is defined as the composition $\left ( \left . \dS_k \right | _{\cV_k} \right )^{-1} \circ \mathbb{P}_{\cV_k}$ and therefore the linearity follows directly form the linearity of both $\dS_k$ and $\mathbb{P}_{\cV_k}$. Finally, since obviously $\mathbb{P}_{\cV_k} \circ \dS_k=\dS_k$, $\left ( \left . \dS_k \right | _{\cV_k} \right )^{-1} \circ \mathbb{P}_{\cV_k} \circ \dS_k = \left ( \left . \dS_k \right | _{\cV_k} \right )^{-1}  \circ \dS_k = I_{\cV_k}$.
\end{proof}

\noindent From the last proof we deduce the following useful equality
$$ f= \dS_k \cD_k f + f_2 \in \cV_k\oplus\cV_k^\perp.$$


Now that we got a sequence of discretization operators from $\{\cR_k\}_{k\geq0}$, we prove that it is nested.
\begin{cor} \label{cor:nested}
Let $\cF$ be a Hilbert space, $\dS_k:V^k\lra  \cF$ injective linear operators satisfying $\cR_{k}(V^{k})\subset \cR_{k+1}(V^{k+1})$, and $\cD_k :\cF \lra V^k$ defined as $\cD_k f:=\argmin_{v\in V^k} \|f-\dS_k v\|^2$. Then
$\{\cD_k\}$ is a nested sequence of discretizations.
\end{cor}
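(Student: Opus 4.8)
The plan is to reduce the nested condition to an inclusion of the ranges $\cV_k := \dS_k(V^k) = \cR_k(V^k)$ and then exploit the orthogonal-projection structure already made explicit in the proof of Theorem \ref{teo:cSk+}. First I would check that each $\cD_k$ is genuinely a discretization operator in the sense of Definition \ref{defi:discretization}: by Theorem \ref{teo:cSk+} it is linear, and since it is a left inverse of $\dS_k$ we get $V^k = \cD_k\dS_k(V^k) \subseteq \cD_k(\cF) \subseteq V^k$, hence $\cD_k(\cF)=V^k$, which carries the denumerable basis $\{\eta_i^k\}$ by hypothesis.

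Next I would use the decomposition recorded immediately after the proof of Theorem \ref{teo:cSk+}, namely $f=\dS_k\cD_k f + f_2$ with $f_2\in\cV_k^\perp$; this says precisely that $\dS_k\cD_k f=\mathbb{P}_{\cV_k}(f)$, the orthogonal projection of $f$ onto $\cV_k$. Because $\dS_k$ is injective, this gives the characterization
\[
\cD_k f = 0 \iff \mathbb{P}_{\cV_k}(f)=0 \iff f\in\cV_k^\perp,
\]
and likewise with $k$ replaced by $k+1$. Then I would invoke the standing hypothesis $\cR_k(V^k)\subset\cR_{k+1}(V^{k+1})$, i.e. $\cV_k\subseteq\cV_{k+1}$, which reverses under orthogonal complementation to $\cV_{k+1}^\perp\subseteq\cV_k^\perp$. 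Consequently, if $\cD_{k+1}f=0$ then $f\in\cV_{k+1}^\perp\subseteq\cV_k^\perp$, hence $\cD_k f=0$, which is exactly the nested condition \eqref{eq:nested_condition}.

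I do not expect a real obstacle here: the only point demanding a little care is the identification of $\dS_k\cD_k f$ with the orthogonal projection onto $\cV_k$, so that the kernel of $\cD_k$ is exactly $\cV_k^\perp$; but this is immediate from the $\cV_k\oplus\cV_k^\perp$ splitting already established. The substantive content is then just that orthogonal complements reverse inclusions, transporting the nestedness of the ranges $\cV_k$ to the nestedness of the discretizations $\cD_k$.
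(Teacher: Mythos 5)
Your proof is correct and follows essentially the same route as the paper: both use the splitting $f=\dS_k\cD_k f+f_2\in\cV_k\oplus\cV_k^\perp$ from Theorem \ref{teo:cSk+} to identify $\cD_kf=0$ with $f\in\cV_k^\perp$, and then transport $\cV_k\subset\cV_{k+1}$ to $\cV_{k+1}^\perp\subset\cV_k^\perp$. Your version is slightly more explicit (noting the injectivity of $\dS_k$ and verifying that $\cD_k(\cF)=V^k$), but the substance is identical.
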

\begin{proof}
We have to prove that $\cD_{k+1} f = 0 \Rightarrow \cD_{k} f = 0.$ From
$\cD_{k+1} f = 0$ we deduce that $ f = 0 + f_2 \in \cV_{k+1}\oplus\cV_{k+1}^\perp$, thus $f\in\cV_{k+1}^\perp.$
But by hypothesis $\cV_k \subset \cV_{k+1}$, which implies $ \cV_{k+1}^\perp \subset \cV_{k}^\perp$.
As a consequence $ f = 0 + f_2 \in \cV_{k}\oplus\cV_{k}^\perp$, then $\cD_{k} f =0.$
\end{proof}
From now on we will assume that $P_k^{k+1}$ is linear, so the following results can only be applied to linear subdivision schemes. Nevertheless, the nonlinear schemes developed in the literature \cite{ADL11,ADLT06,AL05,ADS16,Aslam18,DL-U18,DL-UM16} are usually designed in the point-value or in the cell-average framework, so they do not need the theory presented here.

Now, in order to get a similar definition for the decimation operators, we apply Theorem \ref{teo:cSk+} considering $\cF=V^{k+1}$ and prediction operators instead of reconstruction operators.
\begin{cor} \label{cor:decimation}
Let $V^{k},V^{k+1}$ be Hilbert spaces and let $\nS_{k}^{k+1}:V^{k}\lra V^{k+1}$ be an injective linear operator. Then the operator defined by
$$D_{k+1}^{k} v^{k+1}:=\argmin_{v^{k}\in V^{k}} \|v^{k+1}-\nS_{k}^{k+1} v^{k}\|^2$$
is well defined, linear and it is a left inverse of $\nS_{k}^{k+1}$: $D_{k+1}^{k} \nS_{k}^{k+1} = I_{V_k}$.
\end{cor}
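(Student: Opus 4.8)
The plan is to obtain this as a direct specialization of Theorem \ref{teo:cSk+}. First I would match up the data: take the Hilbert space $\cF$ of Theorem \ref{teo:cSk+} to be $V^{k+1}$, the source space $V^k$ to remain $V^k$, and the injective linear operator $\dS_k$ to be $\nS_k^{k+1}$. Under this dictionary, the operator $D_{k+1}^{k}$ defined in the statement is literally the operator $\cD_k$ produced by Theorem \ref{teo:cSk+}, and the asserted identity $D_{k+1}^{k}\nS_k^{k+1}=I_{V_k}$ is the left-inverse conclusion $\cD_k\dS_k=I_{V_k}$.

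Second I would verify that the hypotheses of Theorem \ref{teo:cSk+} are met under this substitution. The space $V^{k+1}=\ell_{2}(\Z)$ is a Hilbert space with the inner product $\langle u,v\rangle=\sum_\iZ u_i v_i$ recalled above, so it legitimately plays the role of $\cF$; and $\nS_{k}^{k+1}:V^{k}\lra V^{k+1}$ is injective and linear by the hypotheses of the corollary. Hence Theorem \ref{teo:cSk+} applies verbatim and yields that $D_{k+1}^{k}v^{k+1}=\argmin_{v^{k}\in V^{k}}\|v^{k+1}-\nS_{k}^{k+1}v^{k}\|^2$ is well defined, linear, and a left inverse of $\nS_{k}^{k+1}$.

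I do not anticipate any genuine obstacle here; the statement is essentially a change of notation away from Theorem \ref{teo:cSk+}. The only point worth a one-line remark is why the standing assumption $V^k=\ell_{2}(\Z)$ (rather than $\ell_\infty(\Z)$) matters: it is precisely what makes the ambient space $V^{k+1}$ in which we minimize a Hilbert space, so that the orthogonal-projection argument underlying Theorem \ref{teo:cSk+} is available. If one preferred a self-contained proof, one would simply rerun that argument: $\cV:=\nS_k^{k+1}(V^k)$ is a linear subspace of $V^{k+1}$; decompose $v^{k+1}=f_1+f_2$ with $f_1=\mathbb{P}_{\cV}(v^{k+1})$; use injectivity of $\nS_k^{k+1}$ to pull $f_1$ back to a unique $v^{k}\in V^{k}$, which is then the minimizer; and note that feeding $\nS_k^{k+1}v^{k}\in\cV$ into the construction returns $v^{k}$, since $\mathbb{P}_{\cV}$ fixes $\cV$ pointwise.
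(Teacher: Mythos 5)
Your proposal is correct and is exactly the paper's route: the text introducing Corollary \ref{cor:decimation} says it is obtained by "applying Theorem \ref{teo:cSk+} considering $\cF=V^{k+1}$ and prediction operators instead of reconstruction operators," which is precisely your dictionary $\cF\mapsto V^{k+1}$, $\dS_k\mapsto \nS_k^{k+1}$. Your closing remark on why the $\ell_2$ (Hilbert) setting is needed is a fair observation consistent with the paper's standing assumptions.
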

Note that both in Theorem \ref{teo:cSk+} and in Corllary \ref{cor:decimation} different discretization and decimation operators are obtained if the inner product is changed. To prove Theorem \ref{teo:hierar_define}, which deals with the consistency of these new operators, the next inner product may be considered.

\begin{lem} \label{lem:inner}
Let $\cF$ be a Hilbert space with inner product $\langle \cdot,\cdot\rangle $ and let $V_k$ be another Hilbert space. Let $\cR_k:V_k \rightarrow \cF$ be a linear injective operator, and denote for any $v,w\in V^k$
$$\langle v,w\rangle _k:=\langle \cR_k v,\cR_k w\rangle .$$
Then $\langle \cdot,\cdot\rangle _k$ is an inner product of $V^k$.
\end{lem}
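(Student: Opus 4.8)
The plan is to verify directly that the pullback form $\langle \cdot,\cdot\rangle_k$ inherits each defining property of an inner product from $\langle \cdot,\cdot\rangle$ on $\cF$, using only the linearity and injectivity of $\cR_k$. First I would check bilinearity: for $u,v,w\in V^k$ and scalars $\alpha,\beta$, linearity of $\cR_k$ and bilinearity of the inner product on $\cF$ give $\langle \alpha u+\beta v,w\rangle_k = \langle \cR_k(\alpha u+\beta v),\cR_k w\rangle = \alpha\langle \cR_k u,\cR_k w\rangle + \beta\langle \cR_k v,\cR_k w\rangle = \alpha\langle u,w\rangle_k+\beta\langle v,w\rangle_k$, and linearity in the second slot is identical. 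Symmetry is immediate, $\langle v,w\rangle_k=\langle \cR_k v,\cR_k w\rangle=\langle \cR_k w,\cR_k v\rangle=\langle w,v\rangle_k$.

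It then remains to establish positive definiteness. Nonnegativity is clear, since $\langle v,v\rangle_k=\langle \cR_k v,\cR_k v\rangle=\|\cR_k v\|^2\geq 0$. The one step that actually uses the hypothesis on $\cR_k$ is the implication $\langle v,v\rangle_k=0\Rightarrow v=0$: from $\|\cR_k v\|^2=0$ and positive definiteness of $\langle\cdot,\cdot\rangle$ on $\cF$ we obtain $\cR_k v=0$, and since $\cR_k$ is linear and injective (so $\cR_k v=0=\cR_k 0$ forces $v=0$), we conclude $v=0$. Hence $\langle\cdot,\cdot\rangle_k$ is a symmetric, positive definite bilinear form, i.e. an inner product on $V^k$.

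The argument is entirely routine; the only place where care is needed is precisely this definiteness step, where injectivity of $\cR_k$ is indispensable — without it the form would be merely positive semidefinite. Accordingly I would dispatch bilinearity and symmetry in a line each and reserve the emphasis for noting where injectivity enters.
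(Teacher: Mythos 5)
Your verification is correct and is exactly the routine check the paper has in mind (the paper omits the proof, remarking only that it is straightforward): bilinearity and symmetry are inherited directly, and injectivity of $\cR_k$ is used precisely and only for the definiteness implication $\langle v,v\rangle_k=0\Rightarrow v=0$. Nothing is missing.
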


\noindent The proof is straightforward and we do not include it. We define $ \|v\|_k:=\sqrt{\langle v,v\rangle _k} = \|\dS_{k}v\|$, $ \forall v\in V^k.$

Section 4.D. of \cite{Harten96}, and in particular the Theorem 4.5, states that for any convergent sequence of prediction operators $\{P_{k}^{k+1}\}$ there exists a sequence of reconstruction operators $\{\cR_k^H\}$ which are still consistent with $\{\cD_k\}$, fulfilling $P_k^{k+1} = \cD_{k+1}\cR_k^H= \cD_{k+1}\cR_k$ and
\begin{equation} \label{eq:hier}
\cR_{k+1}^H P_{k}^{k+1} = \cR_{k}^H.
\end{equation}
This is an analogous result to the following well-known fact in subdivision theory (see \cite[Theorem 2.4]{Dyn92}): For any convergent subdivision scheme, there exists a compactly supported function $\phi$ such that
\begin{equation} \label{eq:decimation_eq}
\phi(t)=\sum_i a_i \phi(2t-i).
\end{equation}
Indeed, $\cR_k^H$ is obtained if $\psi=\phi$, since \eqref{eq:Rk_in_Rk1} with $b=a$ implies $\tilde v^k = P_k^{k+1} v^k$, hence \eqref{eq:hier}.

\begin{teo} \label{teo:hierar_define}
Let $\{P_{k}^{k+1}\}$ be a convergent sequence of prediction operators. Let us denote $\|v^k\|_{k,H} := \|\cR_k^H v^k\|$, the derived norm of Lemma \ref{lem:inner} applied to $\cR_k^H$. Then
$$\cD_k f:=\argmin_{v\in V^k} \|f-\dS_k^H v\|^2,
\qquad D_{k+1}^{k} w = \argmin_{v\in V^{k}} \|w-P_{k}^{k+1} v\|^2_{k+1,H}$$
are consistent discretization and decimation operators of $\cR_k^H$ and $P_{k}^{k+1}$, respectively. Furthermore the discretization operators are nested and
$$
D_{k+1}^{k}=\cD_{k}\cR_{k+1}^H, \qquad P_{k}^{k+1}=\cD_{k+1}\cR_{k}^H.
$$
\end{teo}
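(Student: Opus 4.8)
The plan is to verify the four assertions in order, leaning heavily on the machinery already assembled. First, \emph{consistency of $\cD_k$ with $\cR_k^H$}: this is immediate from Theorem \ref{teo:cSk+} applied with $\dS_k = \cR_k^H$ (which is injective, being a reconstruction operator built from a convergent scheme), giving $\cD_k \cR_k^H = I_{V^k}$. Second, \emph{consistency of $D_{k+1}^{k}$ with $P_{k}^{k+1}$}: we apply Corollary \ref{cor:decimation}, but with $V^{k+1}$ equipped with the inner product $\langle \cdot,\cdot\rangle_{k+1,H}$ coming from Lemma \ref{lem:inner} applied to $\cR_{k+1}^H$. For this we need $P_{k}^{k+1}$ to be injective as a map between these Hilbert spaces, which holds because $P_{k}^{k+1}$ is injective by hypothesis (it is part of a convergent \emph{sequence} of prediction operators in the sense of Definition \ref{defi:hierar_pred}) and the norm change does not affect injectivity. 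Corollary \ref{cor:decimation} then yields $D_{k+1}^{k} P_{k}^{k+1} = I_{V^k}$.

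Third, \emph{nestedness of $\{\cD_k\}$}: this follows from Corollary \ref{cor:nested} applied to $\dS_k = \cR_k^H$, once we check the hypothesis $\cR_k^H(V^k)\subset \cR_{k+1}^H(V^{k+1})$. But this is exactly what \eqref{eq:hier} provides: $\cR_k^H = \cR_{k+1}^H P_{k}^{k+1}$, so any element $\cR_k^H v^k$ equals $\cR_{k+1}^H(P_{k}^{k+1}v^k)\in \cR_{k+1}^H(V^{k+1})$.

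Fourth, the composition identities $D_{k+1}^{k}=\cD_{k}\cR_{k+1}^H$ and $P_{k}^{k+1}=\cD_{k+1}\cR_{k}^H$. The second is the easier one: by \eqref{eq:hier}, $\cD_{k+1}\cR_k^H = \cD_{k+1}\cR_{k+1}^H P_{k}^{k+1} = (\cD_{k+1}\cR_{k+1}^H)P_{k}^{k+1} = P_{k}^{k+1}$, using the first consistency relation. For the first identity, the key observation is that the norm $\|\cdot\|_{k+1,H}$ on $V^{k+1}$ is precisely the pullback of the $\cF$-norm under $\cR_{k+1}^H$, i.e.\ $\|w\|_{k+1,H} = \|\cR_{k+1}^H w\|$. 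Given $f\in\cF$, write $w := \cD_{k+1}f$, so $\cR_{k+1}^H w = \mathbb{P}_{\cV_{k+1}}(f)$ is the orthogonal projection of $f$ onto $\cV_{k+1}:=\cR_{k+1}^H(V^{k+1})$. Then
\begin{align*}
D_{k+1}^{k} w
&= \argmin_{v\in V^k} \|w - P_{k}^{k+1}v\|_{k+1,H}^2
= \argmin_{v\in V^k} \|\cR_{k+1}^H w - \cR_{k+1}^H P_{k}^{k+1} v\|^2\\
&= \argmin_{v\in V^k} \|\mathbb{P}_{\cV_{k+1}}(f) - \cR_k^H v\|^2,
\end{align*}
where we used \eqref{eq:hier} again. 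Now I would argue that minimizing $\|\mathbb{P}_{\cV_{k+1}}(f) - \cR_k^H v\|^2$ over $v\in V^k$ gives the same minimizer as minimizing $\|f - \cR_k^H v\|^2$: indeed $\cV_k\subset\cV_{k+1}$, so $f - \cR_k^H v = (f - \mathbb{P}_{\cV_{k+1}}f) + (\mathbb{P}_{\cV_{k+1}}f - \cR_k^H v)$ is an orthogonal decomposition with the first term independent of $v$, whence $\|f-\cR_k^H v\|^2 = \|f - \mathbb{P}_{\cV_{k+1}}f\|^2 + \|\mathbb{P}_{\cV_{k+1}}f - \cR_k^H v\|^2$ and the $\argmin$ over $v$ coincides. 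The right-hand side is by definition $\cD_k f$, so $D_{k+1}^{k}\cD_{k+1}f = \cD_k f$ for all $f$, i.e.\ $D_{k+1}^{k} = \cD_k \cR_{k+1}^H$ once we note every element of $V^{k+1}$ is $\cD_{k+1}$ of some $f$ (take $f = \cR_{k+1}^H w$ and use $\cD_{k+1}\cR_{k+1}^H = I$). I expect the main subtlety to be precisely this orthogonal-decomposition argument identifying the two $\argmin$ problems — keeping straight which inner product lives on which space, and making sure the projection $\mathbb{P}_{\cV_{k+1}}$ commutes correctly with the nested subspaces — rather than any deep difficulty; everything else is a direct invocation of the earlier results.
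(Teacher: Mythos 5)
Your proof is correct, and its skeleton matches the paper's: consistency and nestedness come from Theorem \ref{teo:cSk+}, Corollary \ref{cor:decimation} (with the pulled-back inner product) and Corollary \ref{cor:nested}, and the composition identities come from \eqref{eq:hier}. You spell out the first three items more explicitly than the paper does (the paper's proof only addresses the two composition identities and leaves the rest to the preceding results), which is a point in your favour. The one place where you genuinely diverge is the identity $D_{k+1}^{k}=\cD_k\cR_{k+1}^H$. The paper gets it in one line, valid for an arbitrary $w\in V^{k+1}$: substituting $\cR_k^H=\cR_{k+1}^H P_k^{k+1}$ into $\cD_k\cR_{k+1}^H w=\argmin_v\|\cR_{k+1}^H w-\cR_k^H v\|^2$ and factoring $\cR_{k+1}^H$ out of the norm turns the objective into $\|w-P_k^{k+1}v\|^2_{k+1,H}$ exactly, so the two $\argmin$ problems are literally identical — no projection argument needed. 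You instead restrict to $w=\cD_{k+1}f$, prove $D_{k+1}^{k}\cD_{k+1}f=\cD_k f$ via the Pythagorean decomposition $f-\cR_k^H v=(f-\mathbb{P}_{\cV_{k+1}}f)+(\mathbb{P}_{\cV_{k+1}}f-\cR_k^H v)$, and then recover the general case from surjectivity of $\cD_{k+1}$. That detour is sound (the orthogonality you invoke does hold since $\cV_k\subset\cV_{k+1}$), and it yields as a by-product the relation $D_{k+1}^{k}\cD_{k+1}=\cD_k$, i.e.\ that $D_{k+1}^{k}$ is the decimation operator in the original sense of the nested discretization sequence — a fact the theorem does not state and the paper does not verify. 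But for the identity actually claimed, the direct factoring is both shorter and avoids the bookkeeping about which inner product lives where that you flag as the main subtlety. Your algebraic derivation of $P_k^{k+1}=\cD_{k+1}\cR_k^H$ from $\cD_{k+1}\cR_{k+1}^H=I$ and \eqref{eq:hier} is a slightly cleaner packaging of the same computation the paper does by exhibiting the zero of the $\argmin$.
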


\begin{proof}
By hypothesis
$$ \cD_{k+1} \dS_{k}^H w = \argmin_{v\in V^{k+1}} \|\dS_{k}^H w-\dS_{k+1} ^H v\|^2.$$
Choosing $v=\nS_{k}^{k+1} w$ and using \eqref{eq:hier},
$$\dS_{k}^H  w-\dS_{k+1}^H v = \dS_{k}^H  w-\dS_{k+1}^H \nS_{k}^{k+1} w = \dS_{k}^H  w-\dS_{k}^H  w=0 .$$
As a consequence, $ \cD_{k+1} \dS_{k}^H  = P_{k}^{k+1}.$
On the other hand, using \eqref{eq:hier} and Lemma \ref{lem:inner} for $\cR_{k+1}^H$:
\begin{align*}
\cD_{k}\cR_{k+1}^H w&= \argmin_{v\in V^{k}} \|\dS_{k+1}^H  w-\dS_{k+1}^H  \nS_{k}^{k+1} v\|^2 = \argmin_{v\in V^{k}} \|\dS_{k+1}^H ( w-\nS_{k}^{k+1} v)\|^2 \\
&= \argmin_{v\in V^{k}} \| w-\nS_{k}^{k+1} v\|^2_{k+1,H} = D_{k+1}^{k} w.
\end{align*}
\end{proof}

If we start by choosing the discretization operators when constructing a multiresolution framework, a consistent reconstruction must be selected afterwards, and this choice is not unique. For instance, in the point-value framework, the reconstruction can be any interpolation technique. The next result shows how this fact is translated into our point of view, since we prove that, when starting from the reconstruction technique, any consistent discretization operator can be obtained
as in Theorem \ref{teo:hierar_define} by choosing a suitable inner product for $\cF$.

\begin{teo} \label{teo:equivalence}
Let $\cD_k, \cR_k$ be a consistent pair of discretization and reconstruction linear operators.
Then, there exists a scalar product $\langle\langle\cdot,\cdot\rangle\rangle$ of $\cF$ such that
$$\cD_kf=\argmin_{v\in V^k} |||f-\cR_k v|||^2,$$
where $|||f|||^2:=\langle\langle f,f\rangle\rangle$.
\end{teo}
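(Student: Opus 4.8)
The plan is to write the required scalar product down explicitly. Consistency ($\cD_k\cR_k=I$) makes $\cR_k$ injective, so $\cR_k\cD_k$ is a (generally oblique) projection of $\cF$ onto $\cV_k:=\cR_k(V^k)$, and every $f\in\cF$ decomposes into a ``visible part'' $\cR_k\cD_k f$ and a ``residual'' $r(f):=(I-\cR_k\cD_k)f\in\ker\cD_k$. I would set
$$\langle\langle f,g\rangle\rangle:=\langle \cD_k f,\cD_k g\rangle_{V^k}+\langle r(f),r(g)\rangle_{\cF},$$
comparing the discrete data of $f$ and $g$ through the Hilbert product of $V^k$ and adding the sizes of the residuals measured in the original product of $\cF$ (equivalently: the $\cR_k$-transport of the product of $V^k$ onto $\cV_k$, plus the restriction of the ambient product onto $\ker\cD_k$, the two subspaces being declared orthogonal).

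First I would check $\langle\langle\cdot,\cdot\rangle\rangle$ is a scalar product. Bilinearity and symmetry are immediate because $f\mapsto(\cD_k f,r(f))$ is linear. For definiteness, $\langle\langle f,f\rangle\rangle=\|\cD_k f\|_{V^k}^2+\|r(f)\|_{\cF}^2=0$ forces $\cD_k f=0$, hence $r(f)=f-\cR_k\cD_k f=f$, and then $\|r(f)\|_{\cF}=0$ gives $f=0$ — consistency being used precisely here, to identify $r(f)$ with $f$.

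Second, I would establish the minimisation identity by direct expansion. For $v\in V^k$, using $\cD_k\cR_k=I$,
$$\cD_k(f-\cR_k v)=\cD_k f-v,\qquad r(f-\cR_k v)=(f-\cR_k v)-\cR_k(\cD_k f-v)=r(f),$$
so $|||f-\cR_k v|||^2=\|\cD_k f-v\|_{V^k}^2+\|r(f)\|_{\cF}^2$. The second term does not depend on $v$ and the first is a strictly convex quadratic in $v$ with unique minimiser $v=\cD_k f$; thus $\argmin_{v\in V^k}|||f-\cR_k v|||^2=\cD_k f$. (Alternatively, with this product $\cV_k$ and $\ker\cD_k$ are orthogonal, so $\cR_k\cD_k f$ is the orthogonal projection of $f$ onto $\cV_k$ and Theorem~\ref{teo:cSk+} applies verbatim.)

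For the statement as written there is no real obstacle; the only thing to notice is that a sum-of-two-squares product is exactly what realises $\cD_k$ as an $\argmin$. The delicate variant I would treat separately is producing one scalar product that reproduces a whole nested family $\{\cD_k,\cR_k\}_{k\ge0}$ simultaneously (so that $\cV_k\perp\ker\cD_k$ for every $k$). One peels off the flag $\cV_0\subset\cV_1\subset\cdots$ and the complements $W_k:=\ker\cD_k$, $W_0\supset W_1\supset\cdots$, via $U_0:=\cV_0$, $U_{k+1}:=\cV_{k+1}\cap W_k$, obtaining $\cV_k=U_0\oplus\cdots\oplus U_k$, $W_k=U_{k+1}\oplus W_{k+1}$; forcing the $U_k$ mutually orthogonal already gives $\cV_j\perp U_i$ for $i>j$. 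The hard point is what happens beyond $\bigcup_k\cV_k$: one must still accommodate $\bigcap_k W_k$ and, when $\cF$ is strictly larger than the closure of $\bigcup_k\cV_k$, the vectors not reachable by finite combinations of the $U_k$. This is where a filtration-adapted Hamel basis is not enough and one instead leans on density of $\bigcup_k\cV_k$ (automatic for convergent subdivision schemes) or constructs the product level by level and completes.
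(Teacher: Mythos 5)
Your proof is correct and follows essentially the same route as the paper: both decompose $f$ into $\cR_k\cD_k f$ plus the residual $f-\cR_k\cD_k f$ and define the new product as a sum of two squares that makes these two pieces orthogonal, after which the minimisation identity is a one-line expansion. The only (immaterial) difference is that you measure the visible part via $\langle\cD_k f,\cD_k g\rangle_{V^k}$ while the paper uses $\langle\cR_k\cD_k f,\cR_k\cD_k g\rangle_{\cF}$; since $\cR_k$ is injective these give equivalent arguments, and your closing remarks about a single product for the whole nested family go beyond what the theorem claims.
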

\begin{proof}
Let us consider $ \cW_k f=f-\cR_k\cD_k f$, so the next decomposition is obtained
$$ f= \cW_k  f+ \cR_k\cD_k f,$$
and an associated inner product
$$\langle\langle f,g\rangle\rangle:=\langle \cW_k  f,\cW_k  g\rangle  + \langle \cR_k\cD_k f,\cR_k\cD_k g\rangle .$$
Indeed, it is an inner product because of the linearity of the involved operators and the properties of the inner product $\langle \cdot,\cdot\rangle $. This is easy to check, so we will only prove one of the properties as an example:
\begin{align*}
\langle\langle f,f\rangle\rangle= 0 &\rightarrow \langle \cW_k  f,\cW_k  f\rangle  =0 \quad \wedge \quad \langle \cR_k \cD_k f,\cR_k\cD_k f\rangle =0 \\ 
&\rightarrow \cW_k  f=0 \quad \wedge \quad \cR_k\cD_k f=0 \rightarrow f = \cW_k  f + \cR_k\cD_k f = 0.
\end{align*}
For the new inner product, $\cW_k(\cF)$ and $\cR_k\cD_k(\cF)$ are orthogonal, as we prove in what follows. First, note that
$$ \cW_k\cR_k= \cR_k-\cR_k\cD_k\cR_k = \cR_k-\cR_k =0,\quad \cD_k\cW_k=\cD_k - \cD_k\cR_k\cD_k =\cD_k-\cD_k=0.$$
Then
\begin{align*}
\langle\langle\cW_k f, \cR_k\cD_k g\rangle\rangle&= 
\langle \cW_k \cW_k f,\cW_k  \cR_k\cD_k g\rangle  + \langle \cR_k\cD_k \cW_k f,\cR_k\cD_k \cR_k\cD_k g\rangle\\
&= \langle \cW_k \cW_k f,0\rangle  + \langle 0,\cR_k\cD_k \cR_k\cD_k g\rangle =0.
\end{align*}
Hence for $|||f|||:=\sqrt{\langle\langle f,f\rangle\rangle}$, we have that
\begin{align*}
|||f-\cR_k v|||^2 &= |||\cW_k  f+ \cR_k\cD_k f -\cR_k v|||^2\\
&= |||\cW_k f + \cR_k(\cD_k f - v)|||^2 =|||\cW_k f|||^2 + |||\cR_k(\cD_k f - v)|||^2,
\end{align*}
and as a consequence
$$\cD_kf=\argmin_{v\in V^k} |||f-\cR_k v|||^2.$$
\end{proof}

\section{Computation of the decimation and discretization operators in a practical situation} \label{sec:example}

In this section we apply the Theorem \ref{teo:hierar_define} to a subdivision scheme of the form specified in Definition \ref{defi:SS} . That is $(P_k^{k+1}v)_i=\sum_{j\in\Z} a_{i-2j} v_j,$ being $a\in\ell_{2}(\Z)$ compactly supported. 

Let us consider $V^k=\ell_{2}(\Z)$. If the scheme converges, then exists a compactly supported function $\phi$ satisfying \eqref{eq:decimation_eq}. This implies that $\cR_k^H v := \sum_\iZ v_i \phi(2^kt-i)$, because
\begin{align*}
\left (\cR_{k}^H v\right )(t) &= \sum_{j\in\Z} v_j \phi(2^{k} t-j) = \sum_{j\in\Z} v_j \sum_\iZ a_i \phi(2^{k+1} t -2j-i) \\
&= \sum_{j\in\Z} v_j \sum_\iZ a_{i-2j} \phi(2^{k+1} t -i) = \sum_\iZ  \phi(2^{k+1} t -i) \sum_{j\in\Z} a_{i-2j} v_j\\
&=\sum_\iZ (P_k^{k+1} v)_i \phi(2^{k+1}t-i) = \left (\cR_{k+1}^H P_k^{k+1} v\right )(t).
\end{align*} 

Note that all the sums that appear in the calculations above are actually finite sums given the compact support of both $a$ and $\phi$, so no arguments about convergence are needed.
Now, let us compute the expression of the discretization operator given in Theorem \ref{teo:hierar_define}, which is consistent with $\cR_k^H$:
$$\cD_k f:=\argmin_{v\in V^k} \|f-\dS^H_k v\|^2, \qquad f\in L_2(\R).$$
By Theorem \ref{teo:cSk+}, we know that $\cD_k f=u$ with
$$f = \dS^H_k u + f_2 \in \dS^H_k(\ldZ) \oplus \dS^H_k(\ldZ)^\perp.$$
Then, we have to find $u\in\ldZ$ such that
$$ \langle f - \dS^H_k u , \dS^H_kv\rangle =0 \quad \forall v\in\ldZ.$$
Since $ \sum_i \phi(t-i) = 1$, we deduce that $\int_\R \phi = 1$, which is a classic result on subdivision theory.
Then
\begin{align*}
\langle f&-\dS^H_k u,\dS^H_kv\rangle  = \int_\R (f(t)-\sum_\iZ u_i \phi(2^kt-i))(\sum_{j\in\Z} v_j \phi(2^kt-j)) dt\\
  &= \sum_{j\in\Z} v_j \int_\R  \phi(2^kt-j)f(t)dt - \sum_{i,j\in\Z} u_i v_j \int_\R   \phi(2^kt-i) \phi(2^kt-j) dt\\
  &= \sum_{j\in\Z} v_j \int_\R  \phi(2^kt-j)f(t)dt - \sum_{i,j\in\Z} u_i v_j 2^{-k}\eta_{j-i}
\end{align*}
where $\eta_i := \int_\R \phi(t) \phi(t-i) dt$. Again, $\phi$ being compactly supported makes all the sums that appear above finite sums, and moreover $\eta_{j-i}=0$ for $|j-i|> M$, $M \in \Z_+$ large enough. Therefore
\begin{align*}
\langle f&-\dS^H_k u,\dS^H_kv\rangle = \sum_{j\in\Z} v_j \left( \int_\R  \phi(2^kt-j)f(t)dt - 2^{-k}\sum_{i=-A}^{A} u_{j-i}  \eta_{i}\right )
\end{align*}
The right-hand side of this equality is 0 for all $v\in\ldZ$ if and only if there exists $u\in \ell_2(\Z)$ such that
$$ \sum_{i=-A}^{A}  \eta_{i} u_{j-i} = 2^{k}\int_\R  \phi(2^kt-j)f(t)dt,$$
which is an infinite system of linear equations, whose matrix $E=(\eta_{i-j})_{i,j\in\Z}$ is Toeplitz. Theorem \ref{teo:cSk+} assures that the system has a unique solution because it guarantees the existence of $\cD_k$, but to find an explicit expression is not always possible, so numerical algorithms specialized in Toeplitz systems may be needed.

To illustrate this process with an easy example, let us consider the subdivision scheme $(P_k^{k+1}v)_{2i}=(P_k^{k+1}v)_{2i+1}=v_i$, which can also be written as $(P_k^{k+1}v)_i=\sum_j a_{i-2j} v_i$ with $ a_{1}=a_{0}=1$ and $a_{i}=0$ for all $i \not\in\{0,1\}$, where $\phi$ is just the box function $\phi(t)=\chi_{[0,1)}(t)$. We will call this scheme the Step Scheme, which is closely related with the Haar wavelet. Now $\eta_0=1$ and $\eta_i = 0$ for $i\neq 0$, and the system of equations becomes diagonal:
$$ u_{j} = 2^{k}\int_\R  \phi(2^kt-j)f(t)dt = 2^{k}\int_{2^{-k}j}^{2^{-k}(j+1)} f(t)dt.$$
Observe that, in this particular case, $(\cD_k f)_i = 2^{k}\int_{2^{-k}i}^{2^{-k}(i+1)} f(t)dt$, which is the cell-average discretization.

Now, to get the decimation operators which are consistent with our choice of a subdivision scheme, we compute the inner product of Theorem \ref{teo:hierar_define}:
\begin{align*}
\langle v,w\rangle_{k,H}&=\langle \dS^H_{k}v,\dS^H_{k} w\rangle=\langle \sum_\iZ v_i \phi(2^kt-i),\sum_{j\in\Z} w_j \phi(2^kt-j)\rangle  \\
&= \sum_{i,j\in\Z} v_i w_j \langle \phi(2^kt-i),\phi(2^kt-j)\rangle  = 2^{-k}\sum_{i\in\Z} v_i \sum_{j=-A}^A w_{i-j} \eta_j \\
&= 2^{-k} v^T E w,
\end{align*}
where, in the last line, $v$ and $w$ are considered column vectors, denoting by $v^T$ its transpose.
For the Step Scheme, $\langle v,w\rangle_{k,H} = 2^{-k}\sum_{i\in\Z} v_i w_{i}$, and we will see that the associated decimation operator is just the one corresponding to the cell-average framework.
Recall from Theorem \ref{teo:hierar_define} the definition of the decimation operator:
\begin{align*}
D_{k+1}^{k} w = \argmin_{v\in V^{k}} \|w-\nS_k^{k+1} v\|^2_{k+1,H}.
\end{align*}
As above, we know by Theorem \ref{teo:cSk+} that $D_{k+1}^{k} w = u$ with $w-\nS_k^{k+1} u \in \nS_k^{k+1}(\ldZ)^\perp$,
but now the orthogonality is with respect to the product $\langle \cdot,\cdot\rangle _{k+1,H}$, so now our equation is
$$ \langle  w-\nS_k^{k+1} u , P_k^{k+1} v \rangle _{k+1,H} = 0, \qquad \forall v\in\ldZ.$$
This expression can be straightforwardly developed: for all $v\in\ell_2(\Z)$,
\begin{align*}
0 &= \langle  w-\nS_k^{k+1} u , P_k^{k+1} v \rangle _{k+1,H} = 2^{-k-1} ( P_k^{k+1} v)^T E ( w-\nS_k^{k+1} u)  \\
&= 2^{-k-1} v^T ( P_k^{k+1} )^T E ( w-\nS_k^{k+1} u)
\end{align*}
if, and only if, $( P_k^{k+1} )^T E \nS_k^{k+1} u = ( P_k^{k+1} )^T E w$.
This is again a Toeplitz system, as we prove in the following by checking that
\begin{equation} \label{eq:toeplitz1}
\left ( ( P_k^{k+1} )^T E \nS_k^{k+1}\right )_{m,n} = \left ( ( P_k^{k+1} )^T E \nS_k^{k+1}\right )_{m-n,0}, \qquad \forall m,n\in\Z.
\end{equation}
Denoting by $\delta^n$ the Kronecker delta,
\begin{align*}
&\left ( ( P_k^{k+1} )^T E \nS_k^{k+1}\right )_{m,n} = (\delta^m)^T ( P_k^{k+1} )^T E \nS_k^{k+1} \delta^n =   ( P_k^{k+1} \delta^m )^T E (\nS_k^{k+1} \delta^n),
\end{align*}
but $(\nS_k^{k+1} \delta^n)_i = \sum_{j\in\Z} a_{i-2j}\delta^n_j = a_{i-2n}$, so
\begin{align*}
&\left ( ( P_k^{k+1} )^T E \nS_k^{k+1}\right )_{m,n} =  ( (a_{i-2m})_{i\in\Z} )^T E (a_{i-2n})_{i\in\Z} = \sum_{i,j\in\Z} a_{i-2m} \eta_{i-j} a_{j-2n}.
\end{align*}
Taking the correct values of $i,j$, for the right side of \eqref{eq:toeplitz1} we obtain
\begin{align*}
&\left ( ( P_k^{k+1} )^T E \nS_k^{k+1}\right )_{m-n,0}  = \sum_{i,j\in\Z} a_{i-2m+2n} \eta_{i-j} a_{j}.
\end{align*}
With a simple change of summation variables, now we can see that \eqref{eq:toeplitz1} holds true.

For the Step Scheme, this Toeplitz matrix is a diagonal matrix with constant diagonal 2:
\begin{align*}
\left (( ( P_k^{k+1} )^T E \nS_k^{k+1}\right )_{m,n} &= \sum_{i,j\in\Z} a_{i-2m+2n} \eta_{i-j} a_{j} \\
&= \sum_{i\in\Z} a_{i-2m+2n} a_{i} = a_{2(n-m)} + a_{1+2(n-m)},
\end{align*}
which is 2 if $n=m$ and 0 if $n\neq m$. Hence $2 u = ( P_k^{k+1} )^T E w$. The right part can be calculated analogously:
\begin{align*}
2 u_n &= ( P_k^{k+1} \delta^n )^T E w = (a_{i-2n})_{i\in\Z} )^T E w = \sum_{i,j\in\Z} a_{i-2n} \eta_{i-j} w_j \\
&= \sum_{j\in\Z} \eta_{2n-j}w_j + \eta_{2n+1-j}w_j = w_{2n}+w_{2n+1}.
\end{align*}
Finally we have arrived at our desired conclusion since we have obtained that
$$ (D_{k}^{k-1} w)_i = u_i = \frac12(w_{2i}+w_{2i+1}),$$
which is the usual decimation in the cell-average framework.

\bibliographystyle{plain}
\bibliography{biblio}

\end{document}